\newcommand{\trans}{\mathsf{T}}
\newcommand{\e} {\varepsilon}
\newcommand{\di}[1]{\operatorname{d}\!#1}
\newcommand{\p} {\mathbf{P}}
\newcommand{\R} {\mathcal{R}}
\newcommand{\I} {\operatorname{Im}}
\newcommand{\op} {\operatorname}
\newcommand{\Rea} {\mathcal{R}}
\newcommand{\pro} {\mathcal{P}}
\theoremstyle{plain}
\newtheorem{definition}{Definition}[section]
\newtheorem{lemma}{Lemma}[section]
\newtheorem{theorem}{Theorem}[section]
\begin{document}

\begin{frontmatter}
\title{Computation of open-loop inputs for uniformly ensemble controllable systems}

\runtitle{Computation of inputs for ensemble controllable systems}


\begin{aug}
\author{\fnms{Michael} \snm{Sch\"onlein}\ead[label=e1]{schoenlein@mathematik.uni-wuerzburg.de}}

\runauthor{M. Sch\"onlein}

\affiliation{University of W\"urzburg}

\address{Institute for Mathematics\\
         University of W\"urzburg\\
         Emil-Fischer Stra\ss e 40\\
         97074 W\"urzburg, Germany\\
\printead{e1}\\
\phantom{E-mail:\ }
}
\end{aug}

\begin{abstract}
This paper presents computational methods for families of linear systems depending on a parameter. Such a family is called ensemble controllable if for any family of parameter-dependent target states and any neighborhood of it there is a parameter-independent input steering the origin into the neighborhood.  Assuming that a family of systems is ensemble controllable we present methods to construct suitable open-loop input functions. Our approach to solve this infinite-dimensional task is based on a combination of methods from the theory of linear integral equations and finite-dimensional control theory.

\end{abstract}

\begin{keyword}[class=AMS]
\kwd{45A05}
\kwd{93B05}
\kwd{93B40}
\end{keyword}

\begin{keyword}
\kwd{Linear integral equation of first kind}
\kwd{Moment discretization}
\kwd{Convergence rates}
\kwd{Reproducing kernel Hilbert spaces}
\kwd{Controllability}
\kwd{Input computation}
\kwd{Consensus methods}

\end{keyword}

This work is accepted for publication in \emph{Mathematical Control and Related Fields}
and was supported by the German Research Foundation (DFG) \\ grant SCHO 1780/1-1.\\[2ex]
Received 30. September 2019; revised 30. August 2020.

\end{frontmatter}

\section{Introduction}

Recently, the topic of ensemble control has become an emerging field in mathematical systems and control theory. An essential characteristic is the task of controlling a large, potentially infinite, number of states, or systems at once, using a single open-loop input function or a single feedback controller. Although the notion {\it ensemble control} is relatively new, such problems have been studied in the 1970s and 1980s under different names. For instance, in \cite{Tannenbaum1981invariance} the {\it blending problem} refers to the task of stabilizing a family of scalar transfer functions by a common dynamic feedback compensator. Also, in a series of papers Ghosh and co-workers consider the problem of simultaneous controllability and stabilizability for families of linear systems, cf. \cite{ghosh1985,ghosh1986_I,ghosh1988_II,ghosh1986_partial_pp}. 

In principle, ensemble control embraces several facets. A current nice exposition for the applicability of ensemble control is given in \cite[Section~2.4]{brockett2012notes}.  For instance, it contains problems where states or observations of a system are subject to uncertainties and only a probability distribution on the state space or output snapshots are available. In this case, the ensemble control problem  is to morph a given initial probability density function of the states into a desired one by transporting it along a linear system. This yields controllability problems for the Liouville equation and the Fokker-Plank equation.  We refer to, e.g. \cite{brockett2012notes,chen2017optimal,DIRR20161018,fleig2016estimates,fuhrmann2015mathematics} for recent works on this topic and note that this list is not intended to be complete. Also, in \cite{shen2017discrete,zeng2016ensemble} the problem of reconstructing an initial probability density function from output snapshots is addressed.

Moreover, ensemble control also deals with families of systems, usually defined by parameter-dependent systems. Here, the goal is to control the entire family by finitely many open-loop inputs or feedback controllers. That is, given families of initial states and desired terminal states the task is to steer the initial family towards the desired family in some finite time by a parameter-independent input function. Problems of his kind arise for instance in robotics and quantum control, cf. \cite{becker2012approximate,li2006control,turinici2004optimal}. Of course, characterizations of ensemble controllability depend on the structure of the parameter-dependent system. For necessary and sufficient conditions for ensemble controllability for linear systems, bilinear systems and nonlinear systems we refer to \cite{dirr2018uniform,helmke2014uniform,li_tac_2016,li2011,schonlein2016controllability,Zeng_scl_2016}  and  \cite{belhadj2015ensemble,Gauthier_scl_2018,dirr2012ensemble} and \cite{Agrachev_ensemble_lie_2016}, respectively.

In this paper we consider ensembles defined by families of parameter-dependent linear systems. Assuming that it is ensemble controllable and given a family of target states, we tackle the question how to derive suitable open-loop input functions.  The rest of the paper is organized as follows. In Section~\ref{sec:problem} we outline the basic problem studied in this paper and provide the necessary notations and definitions concerning ensemble controllability. In Section~\ref{sec:collocation} we extend a known discretization method to obtain approximate solutions to vector-valued linear integral equations of first kind. To get such a collocation method, we provide the reproducing kernel structure for Hilbert spaces of vector-valued functions. Moreover, we derive a convergence rate under the weaker regularity assumption that the reproducing kernel is Lipschitz continuous. Note that the standard result by Nashed and Wahba \cite[Theorem~3.1]{nashed_math_comp_1974} requires that the reproducing kernel satisfies a differentiability condition. In Section~\ref{sec:consensus} we present new methods that provide algorithms for the computation of suitable open-loop input functions.

\section{Problem statement and computational method}\label{sec:problem}

In this paper we consider families of parameter-dependent linear control systems 
\begin{equation}\label{eq:sys-par} 
\begin{split}
\tfrac{\partial x}{\partial t}
(t,\theta)&=A(\theta)x(t,\theta)+B(\theta)u(t)\\
x(0,\theta)&=x_0(\theta)\in\mathbb{R}^n,
\end{split}
\end{equation}
where the matrices $A(\theta)\in\mathbb{R}^{n\times n}$ and $B(\theta)\in\mathbb{R}^{n\times m}$ depend continuously on a parameter $\theta$ which is varying over a nonempty compact interval $\p \subset \mathbb{R}$. Throughout the paper we will use the short notation $(A,B) \in C_{n,n}(\p) \times C_{n,m}(\p)$ for $A \in C(\p,\mathbb{R}^{n \times n})$ and $B \in C(\p,\mathbb{R}^{n \times m})$. Similarily we shortly write $L^q_m([0,T])$ instead of $L^q([0,T],\mathbb{R}^m)$. For $u\in L^1_m([0,T])$ let $\varphi\big(t,u,x_0(\theta),\theta\big)$ denote the solution to \eqref{eq:sys-par}. To express that \eqref{eq:sys-par} defines an infinite-dimensional system whose states are functions mapping the compact interval $\p$ to $\mathbb{R}^n$ we write 
$\varphi(t,u,x_0)(\theta) :=\varphi\big(t,u,x_0(\theta),\theta\big)$.

The precise definition of ensemble controllability is as follows. A pair $(A,B) \in C_{n,n}(\p)\times C_{n,m}(\p)$ is called \emph{uniformly ensemble controllable} if for every $\e>0$ and for every $f \in C_n(\p)$ there are $T>0$ and $u \in L^1_m([0,T])$ such that
 \begin{equation*}
\|\varphi(T,u,x_0) -f\|_\infty =\sup_{\theta \in \p} \| \varphi(T,u,x_0(\theta),\theta) - f(\theta) \|_{\mathbb{R}^n} < \e.
\end{equation*}
As shown in \cite[Thm.~3.1.1, Remark~3.1.2]{triggiani75} in continuous-time the final time $T>0$ can be chosen arbitrary. Thus, from now we fix $T>0$. We note that for discrete-time systems the final time $T>0$ depends on the target state $f$ and the desired accuracy $\e>0$. 

Most of the literature mentioned in the introduction considers the problem of deriving convenient necessary and sufficient conditions for uniform ensemble controllability in terms of $(A,B)$. Almost all results are existence results, i.e. the existence of an appropriate input function is guaranteed but explicit expressions are not available in general. Here, we tackle the next step, namely supposing a pair $(A,B)$ is uniformly ensemble controllable, this paper considers the problem of determining suitable input functions.

{In order to make use of Hilbert space properties we are looking for input functions in $L^2_m([0,T])$.} If the pair $(A,B) \in C_{n,n}(\p) \times C_{n,m}(\p)$ is uniformly ensemble controllable, the {range of the} compact reachability integral operator 
 \begin{equation}\label{eq:def_reachability_operator}
\R \colon L^2_m([0,T]) \to L^2_n(\p), \quad \R u (\theta) = \int_0^T e^{A(\theta)(T-s)} B(\theta) \, u(s) \di{s}
\end{equation}
is dense in $C_n(\p)$, cf. \cite{dirr2018uniform}.  We consider how to determine an appropriate input function $u \in  L^2_m([0,T]) $ such that for given $f \in C_n(\p)$  and $\e >0$ one has                               
 \begin{equation}\label{eq:basic-problem-moment-collo}
\| \R u - f\|_{C_n(\p)}< \e.
\end{equation}
In other words, the problem of computing an appropriate open-loop input function is equivalent to finding an approximate solution to the linear operator equation
 \begin{equation}\label{eq:lin_op_eq}
 \R u = f.
\end{equation}
This is a classical ill-posed inverse problem. Well-known necessary and sufficient conditions such that \eqref{eq:lin_op_eq} has a solution is given by Picard's criterion, cf.~\cite{engl_etal_1996,kirsch2011introduction,kress}. It also provides an explicit representation of the solution. This, however, has the drawback that it depends on the singular system of the compact reachability operator $\mathcal{R}$. The results in  \cite{li2011} are based on Picard's criterion. Other related approaches can be found in~\cite{TIE20173051,zeng2018computation}.

In this paper we follow a different approach, namely a discretization method. {Based on the assumption that} $(A,B) \in C_{n,n}(\p) \times C_{n,m}(\p)$ {is uniformly} ensemble controllable, we have 
\begin{align*}
\overline{\I\R} = C_n(\p) \qquad \text{ and } \qquad (\I\R)^\bot = \{0\}.
\end{align*}
The uniform ensemble reachability assumption yields that for every $f\in C_n(\p)$ and every $\e >0$ there is a function $g \in \operatorname{Im}\operatorname{\R}$ such that $\|f-g\|_\infty<\e$.  Thus, without loss of generality we treat the case that $f$ is an element of the range of $\R$. Despite the regularization techniques and projection methods available in the literature above, we follow a different approach {called moment collocation, which inherits well-known techniques from control theory, e.g. the controllability Gramian. To obtain computational methods we will incorporate} ideas originating from decentralized methods to solve linear finite-dimensional equations, as presented, e.g. in~\cite{anderson2015decentralized,mou2013fixed,shi2017network}.
Finally, we note that our subsequent methods do not depend on the singular system of the reachability operator $\Rea$, which is used in \cite{li2011}.

\section{Moment collocation}\label{sec:collocation}

In this section we present {the moment collocation} method to derive approximate solutions to the linear equation \eqref{eq:lin_op_eq}. The basic idea is simply to take $N \in \mathbb{N}$ different parameters out of the compact parameter interval $\p=[\theta^-,\theta^+]$, i.e. we pick
\begin{equation*}
\theta^-= \theta_1 < \theta_2 < \cdots < \theta_N=\theta^+.
\end{equation*}
The $\theta_i$'s are sometimes called \emph{moments}. For the pair $(A,B)\in C_{n,n}(\p) \times C_{n,m}(\p)$ evaluated at the moments we use the short notation
\begin{equation*}
(A_k,B_k) := (A(\theta_k) , B(\theta_k) ) , \quad k=1,...,N.
\end{equation*}
Then, we define the interpolation operator $\mathcal{I}_N \colon L^2_m([0,T]) \to \mathbb{R}^{Nn}$,
 \begin{equation*}
\mathcal{I}_N u  = \begin{pmatrix}
\int_0^T e^{A_1(T-s)} B_1 \, u(s) \di{s}\\
\vdots\\
\int_0^T e^{A_N(T-s)} B_N \, u(s) \di{s}
                  \end{pmatrix}.
\end{equation*}
A justification for the term interpolation operator can be found e.g. in  \cite[Section~11.2]{kress}.  
As in the proof of \cite[Theorem~3~(c)]{dirr2018uniform}, the application of the restriction property to $\{\theta_1,..., \theta_N\}$ yieds that the parallel connection 
 \begin{equation}\label{eq:collo-def-parallel-connetion}
\left(
\begin{pmatrix}
A_1 &  & \\
  & \ddots & \\
   &  &        A_N\end{pmatrix},
\begin{pmatrix}
B_1\\
\vdots\\
   B_N        \end{pmatrix}
   \right)
\end{equation}
of the finite-dimensional pairs $(A_k,B_k)$, $k=1,...,N$ is controllable. Thus, for each $f \in C_n(\p)$ and
 \begin{equation*}
F_N = \begin{pmatrix}
f(\theta_1)\\
\vdots\\
f(\theta_N)        
\end{pmatrix}
\end{equation*}
the linear equation
\begin{equation}\label{eq:lin-eq-RNu=FN}
\mathcal{I}_N u = F_N 
\end{equation}
has a unique minimal $L^2$-norm solution, denoted by $u_{\parallel N}$. It is well-known, cf. \cite[Section~3.5, Theorem~5]{sontag} that the solution can be written as
\begin{equation*}\label{eq:u_parallel}
u_{\parallel N}(s) = \begin{pmatrix}
B_1^\trans & \cdots &    B_N^\trans \end{pmatrix} \begin{pmatrix}
e^{A_1^\trans(T-s)} &  & \\
  & \ddots & \\
   &  &        ^{A_N^\trans(T-s)}\end{pmatrix}
   W^{-1}_\parallel F_N,
\end{equation*}
where $W_\parallel$ denotes the controllability Gramian of the parallel connetion \eqref{eq:collo-def-parallel-connetion} given by
\begin{equation}\label{eq:def-Gramian_parallel}
 \begin{split}
\int_0^T
\begin{pmatrix}
e^{A_1(T-s)} B_1 & \cdots &  e^{A_N(T-s)}  B_N
\end{pmatrix}
\begin{pmatrix}
B_1^\trans e^{A_1^\trans(T-s)}& \cdots &    B_N^\trans e^{A_N^\trans(T-s)}
\end{pmatrix} 
\di{s}\\
= \left(\int_0^T  e^{A_l(T-s)} B_l  B_k^\trans e^{A_k^\trans(T-s)}      \di{s}  \right)_{l,k=1,...,N}. \end{split}
\end{equation}

\medskip
In the following we are interested in, whether for given $f \in C_n(\p)$ and $\e>0$, the input $u_{\parallel N}$ can be used to obtain a suitable input satisfying \eqref{eq:basic-problem-moment-collo} by choosing suitable moment parameters $\{\theta_1,...,\theta_N\}$. To this end, we define
\begin{equation*}
\Delta_N:= \sup_{\theta \in \p} \left( \inf_{i=1,...,N} |\theta_i - \theta| \right).
\end{equation*}
Note, if the moments are chosen equidistantly, i.e. for $\delta>0$ and $N \in \mathbb{N}$ the moments
\begin{equation*}
 \theta^- = \theta_1 \leq \cdots < \theta_N  = \theta_+  \quad \text{ are such that } \quad   | \theta_{i+1} - \theta_i| = \delta,
\end{equation*}
it holds 
$$ \Delta_N \leq \frac{\delta}{2}.$$

{In order to carry out the convergence analysis we need some preparations. In doing so, we introduce Hilbert spaces with reproducing kernels. Let $X_n(\p)$ denote a Hilbert space of functions from $\p$ to $\mathbb{R}^n$.} The following definition is taken from \cite{pedrick} and is in line with the definition in \cite{nashed_math_comp_1974} which treats the scalar case.

\begin{definition}[Reproducing kernel] 
A reproducing kernel for the Hilbert space $X_n(\p)$ is a family $Q(\theta,\tilde \theta)$ of transformations from $\mathbb{R}^n$ to $\mathbb{R}^n$ defined for each $\theta, \tilde \theta \in \p$ and having the properties:
\begin{enumerate}
 \item for every $\tilde \theta \in \p$ and $v \in \mathbb{R}^n$ it holds $ Q(\cdot, \tilde \theta)v \in X_n(\p)$.
 \item for every $\tilde \theta \in \p, f \in X_n(\p)$ and $v \in \mathbb{R}^n$ it holds
 \begin{equation*}
\langle v,f(\tilde \theta) \rangle_{\mathbb{R}^n} = \langle Q(\cdot,\tilde \theta )v ,f \rangle_{X_n(\p)}.
\end{equation*}
\end{enumerate}
\end{definition}
\medskip

{To get a convergence result, we equip the range $\I\R$ with an inner product}. Let  $\R^\sharp \colon \I\R \to L^2_m([0,T])$ denote the Moore-Penrose inverse of $\R$, that is for $f \in \I\R$ the element $\R^\sharp f \in L^2_m([0,T])$ is the unique solution of $\R u=f$ with minimal $ L^2_m([0,T])$-norm, cf. \cite{engl_etal_1996}. Then, an inner product on $  \I \R$ is given by
 \begin{equation*}
\langle f,g \rangle_{\I\R} := \langle \R^\sharp f, \R^\sharp g \rangle_{L^2_m([0,T])} = \int_0^T \langle \R^\sharp f (s), \R^\sharp g(s) \rangle_{\mathbb R^m} \di{s} \qquad f,g \in \I\R.
\end{equation*}

\begin{lemma}
$(\I\R, \langle\cdot,\cdot\rangle_{\I \R}) $ is a reproducing kernel Hilbert space.
\end{lemma}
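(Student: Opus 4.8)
The plan is to verify the two defining features of a reproducing kernel Hilbert space in turn: first that $(\I\R, \langle\cdot,\cdot\rangle_{\I\R})$ is genuinely a Hilbert space, and then that for each $\tilde\theta \in \p$ and $v \in \mathbb{R}^n$ the evaluation functional $f \mapsto \langle v, f(\tilde\theta)\rangle_{\mathbb{R}^n}$ on $\I\R$ is bounded, so that Riesz representation produces the family $Q$. In fact I would construct $Q$ explicitly rather than appeal to Riesz abstractly, since the adjoint of $\R$ hands it to us in closed form.

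For completeness, the key observation is that $\R^\sharp$ is by construction an isometry from $(\I\R, \langle\cdot,\cdot\rangle_{\I\R})$ into $L^2_m([0,T])$, and that it is in fact a bijection onto the closed subspace $(\ker\R)^\bot$. Indeed, for $f \in \I\R$ one has $\R\R^\sharp f = f$, while for $u \in (\ker\R)^\bot$ the minimal-norm property gives $\R^\sharp \R u = u$; hence $\R$ and $\R^\sharp$ are mutually inverse between $\I\R$ and $(\ker\R)^\bot$. Positive-definiteness of $\langle\cdot,\cdot\rangle_{\I\R}$ follows from the same identity, since $\R^\sharp f = 0$ forces $f = \R\R^\sharp f = 0$. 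As $(\ker\R)^\bot$ is closed in $L^2_m([0,T])$, it is complete, and completeness transports back through the isometry $\R^\sharp$; thus $(\I\R, \langle\cdot,\cdot\rangle_{\I\R})$ is a Hilbert space.

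For the reproducing property I would introduce, for fixed $\tilde\theta \in \p$ and $v \in \mathbb{R}^n$, the function $\kappa_{\tilde\theta,v}(s) := B(\tilde\theta)^\trans e^{A(\tilde\theta)^\trans(T-s)} v$, which is continuous on $[0,T]$ and hence lies in $L^2_m([0,T])$. Writing $f = \R\R^\sharp f$ and moving $B(\tilde\theta)^\trans e^{A(\tilde\theta)^\trans(T-s)}$ across the inner product (this is exactly the action of the adjoint of $\R$) gives $\langle v, f(\tilde\theta)\rangle_{\mathbb{R}^n} = \langle \kappa_{\tilde\theta,v}, \R^\sharp f\rangle_{L^2_m([0,T])}$. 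Cauchy--Schwarz then bounds this by $\|\kappa_{\tilde\theta,v}\|_{L^2_m([0,T])}\,\|f\|_{\I\R}$, establishing boundedness. Setting $Q(\cdot,\tilde\theta)v := \R\kappa_{\tilde\theta,v} \in \I\R$ realises the kernel: since $\R^\sharp\R\kappa_{\tilde\theta,v}$ is the orthogonal projection of $\kappa_{\tilde\theta,v}$ onto $(\ker\R)^\bot$ and $\R^\sharp f$ already lies there, the pairing $\langle \R^\sharp(Q(\cdot,\tilde\theta)v), \R^\sharp f\rangle_{L^2_m([0,T])}$ reduces to $\langle \kappa_{\tilde\theta,v}, \R^\sharp f\rangle_{L^2_m([0,T])} = \langle v, f(\tilde\theta)\rangle_{\mathbb{R}^n}$, which is property~(2); property~(1) holds by the very definition of $Q$. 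Evaluating $\R\kappa_{\tilde\theta,v}$ yields the explicit Gramian-type expression $Q(\theta,\tilde\theta) = \int_0^T e^{A(\theta)(T-s)} B(\theta) B(\tilde\theta)^\trans e^{A(\tilde\theta)^\trans(T-s)}\,\di{s}$, mirroring the blocks of $W_\parallel$ in \eqref{eq:def-Gramian_parallel}.

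The one genuinely delicate point is completeness. Because the problem is ill-posed, $\I\R$ is only dense, not closed, in $C_n(\p)$, so completeness cannot be inherited from the ambient norm; it is the transported inner product $\langle\cdot,\cdot\rangle_{\I\R}$, making $\R^\sharp$ an isometry onto a \emph{closed} subspace of $L^2_m([0,T])$, that supplies it. A secondary subtlety is that $\kappa_{\tilde\theta,v}$ need not itself lie in $(\ker\R)^\bot$; this is harmless, since only its pairing with elements of $(\ker\R)^\bot$ enters, and applying $\R$ automatically discards the kernel component.
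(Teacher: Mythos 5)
Your proof is correct and follows essentially the same route as the paper: the same explicit Gramian-type kernel $Q(\theta,\tilde\theta)=\int_0^T e^{A(\theta)(T-s)}B(\theta)B(\tilde\theta)^\trans e^{A(\tilde\theta)^\trans(T-s)}\,\di{s}$, realised as $\R$ applied to $\kappa_{\tilde\theta,v}(s)=B(\tilde\theta)^\trans e^{A(\tilde\theta)^\trans(T-s)}v$, and the same computation reducing $\langle Q(\cdot,\tilde\theta)v,f\rangle_{\I\R}$ to $\langle \kappa_{\tilde\theta,v},\R^\sharp f\rangle_{L^2_m([0,T])}=\langle v,f(\tilde\theta)\rangle_{\mathbb{R}^n}$. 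The only differences are that you prove completeness of $(\I\R,\langle\cdot,\cdot\rangle_{\I\R})$ directly via the isometry onto the closed subspace $(\ker\R)^\bot$ where the paper simply cites \cite[Prop.~2.1]{nashed_math_comp_1974}, and that you explicitly justify why a possible $\ker\R$-component of $\kappa_{\tilde\theta,v}$ is harmless --- a point the paper passes over when it identifies $\R^\sharp Q(\cdot,\tilde\theta)v$ with $B(\tilde\theta)^\trans e^{A(\tilde\theta)^\trans(T-\cdot)}v$.
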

\medskip
{\it Proof.} {That $\I\R$ defines a Hilbert space is shown in \cite[Prop.~2.1]{nashed_math_comp_1974}.} To show the second claim, we verify that 
 \begin{equation*}
Q(\theta, \tilde \theta) = \int_0^T e^{A(\theta)(T-s)} B(\theta) B(\tilde \theta)^\trans\,e^{A(\tilde \theta)^\trans(T-s)} \di{s}
\end{equation*}
defines a reproducing kernel. 

To conclude the first property, let $v\in \mathbb{R}^n$ and $\tilde \theta \in \p$. Then, by definition we have
 \begin{equation*}
Q(\theta, \tilde \theta)v = \int_0^T e^{A(\theta)(T-s)} B(\theta) B(\tilde \theta)^\trans\,e^{A(\tilde \theta)^\trans(T-s)}v \di{s}.
\end{equation*}
Since $s \mapsto B(\tilde \theta)^\trans\,e^{A(\tilde \theta)^\trans(T-s)}v $ is an $L^2_m([0,T])$-function, the first property follows.

To conclude the second property take $f \in \I\R$ and let $u_f:= \R^\sharp f \in L^2_m([0,T])$. Then, for every $v \in \mathbb{R}^n$ and $\tilde \theta \in \p$ one has on one hand
 \begin{equation*}
\langle v,f(\tilde \theta) \rangle_{\mathbb{R}^n} =  \int_0^T v^\trans \, e^{A(\tilde \theta)(T-s)} B(\tilde \theta) u_f(s) \di{s}
\end{equation*}
and on the other hand
 \begin{equation*}
 \begin{split}
\langle Q(\cdot,\tilde \theta )v ,f \rangle_{\I\R} &= \langle \R^\sharp Q(\cdot,\tilde \theta )v , \R^\sharp f \rangle_{L^2_m([0,T])} \\
&= \int_0^T \langle  B(\tilde \theta)^\trans\,e^{A(\tilde \theta)^\trans(T-s)}v ,u_f(s)  \rangle_{\mathbb{R}^n}  \di{s}\\
&= \int_0^T v^\trans \, e^{A(\tilde \theta)(T-s)} B(\tilde \theta) u_f(s) \di{s}.
\end{split}
\end{equation*}
The assertion follows by comparing the right hand sides.
\hfill $\Box$

\medskip
%
%
%
Next, we present the essential structure given by the reproducing kernel. Let $\operatorname{e}_i$ denote the $i$th standard basis vector in $\mathbb{R}^n$ and  
\begin{align*}
U := \overline{\operatorname{span}\{ u_{i,\theta}= B(\theta)^\trans e^{A(\theta)^\trans(T-\cdot)} \operatorname{e}_i\, \, | \, \, i=1,...,n\, , \theta \in \p\} } \subset L_m^2([0,T])
\end{align*}
and note that
\begin{align*}
\I \Rea = \operatorname{span}\{  Q(\cdot,\tilde \theta) \operatorname{e}_i \, | \, i=1,...,n, \, \tilde \theta \in \p\}.
\end{align*}
Then, by the reproducing kernel structure we get
 \begin{equation*}
 \begin{split}
\langle \Rea u_{i,\tilde \theta} ,\Rea u_{j,\hat \theta} \rangle_{\I\R} &=
 \langle Q(\cdot,\tilde \theta )\operatorname{e}_i ,  Q(\cdot,\hat \theta )\operatorname{e}_j\rangle_{\I\R} = \langle \operatorname{e}_i, Q(\tilde \theta, \hat \theta )\operatorname{e}_j \rangle_{\mathbb{R}^n} \\
&= \int_0^T \langle  u_{i,\tilde \theta} (s) ,u_{j,\hat \theta}(s)  \rangle_{\mathbb{R}^n} \di{s}
= \langle  u_{i,\tilde \theta} ,u_{j,\hat \theta}  \rangle_{L_m^2([0,T])}.
\end{split}
\end{equation*}
That is, $\Rea \colon U \to \I \Rea$ is an isometric isomorphism and using $\Rea \Rea^\sharp f =f$ for all $f \in \I \Rea$, we have 
\begin{equation}\label{eq:convergence_reproducing_witz}
\begin{split}
 \|\R u - f \|_{\operatorname{Im}\R}^2 &= \langle \Rea u ,\Rea u \rangle_{\I\R} - 2 \langle \Rea u ,\Rea \Rea^\sharp f \rangle_{\I\R} +\langle f ,f \rangle_{\I\R}\\
 &=  \langle  u , u \rangle_{L^2_m([0,T])} - 2 \langle  u , \ \Rea^\sharp f\rangle_{L^2_m([0,T])} +\langle \Rea^\sharp f ,\Rea^\sharp f \rangle_{L^2_m([0,T])} \\
 &=  \|u -\Rea^\sharp  f\|_{L^2_m([0,T])}^2
 \end{split}
\end{equation}
for all $u \in U$.

The significance here is that the convergence analysis of the moment collocation method in the Hilbert space $L^2_m([0,T])$ can be traced back to a convergence analysis in $(\operatorname{Im}\R, \langle\cdot,\cdot\rangle_{\I \R}) $.
%
%
%
Next, we construct the solution $u_{\parallel N}$ by following a classical approach in the theory of integral equations, cf. \cite{kress,nashed_math_comp_1974}. To do so, we take finite-dimensional subspaces of $\I\Rea$ by choosing for $i \in \{1,...,n\}$ and $k \in \{1,...,N\}$  the $i$th column of the matrix $B_k^\trans e^{A^\trans_k(T-s)} $ as an element of $L^2_m([0,T])$. That is, we define
\begin{align*}
v_{i,k}(\cdot) := Q(\cdot,\theta_k) \operatorname{e}_i,  \quad u_{i,k}(\cdot) := B(\theta_k)^\trans e^{A(\theta_k)^\trans(T-\cdot)} \operatorname{e}_i
\end{align*}
and
\begin{align*}
V_{N} &:= \operatorname{span}\{ v_{1,k},...,v_{n,k}\, \, | \, \, k=1,...,N\} \subset \I\Rea \\ 
U_{N} &:= \operatorname{span}\{ u_{1,k},...,u_{n,k}\, \, | \, \, k=1,...,N\} \subset L_m^2([0,T]).
\end{align*}
We note explicitly that, by definition it holds
\begin{align}\label{eq:Ruik-vik}
\Rea u_{i,k} = v_{i,k} \qquad \text{ and } \qquad u_{i,k} = \Rea^\sharp v_{i,k}
\end{align}
for all $i=1,...,n$   and $ k =1,...,N$. Without loss of generality, we assume that the functions $v_{i,k}$ are linearly independent. Consequently, $U_N$ and $V_N$ are  $nN$-dimensional subspaces of $L^2_m([0,T])$ and $\I \R$, respectively.

%
%

Now, we return to the solution $u_{\parallel N}$ of \eqref{eq:lin-eq-RNu=FN} and shall show that it lies in $U_N$. To see this, let $u \in U_N$, i.e. there are real numbers $\alpha_{1,1},...,\alpha_{n,N}$ such that 
\begin{equation}\label{eq:def-u-alpha-v}
 u= \sum_{k=1}^{N} \sum_{i=1}^{n} \alpha_{i,k} u_{i,k}.
\end{equation}
Let $\Rea_k \colon L^2_m([0,T]) \to \mathbb{R}^{n}$,
 \begin{equation*}
\Rea_k u  =
\int_0^T  e^{A_k(T-s)} B_k\, u(s) \di{s}.
\end{equation*}
Then, the linear equation \eqref{eq:lin-eq-RNu=FN} can also be written as follows
\begin{equation*}
\begin{pmatrix}
\Rea_1u_{1,1} & \cdots & \Rea_1u_{n,N}\\
\vdots &  & \vdots\\
\Rea_{N}u_{1,1} & \cdots & \Rea_{N}u_{n,N} 
\end{pmatrix}
\begin{pmatrix}
\alpha_{1,1}\\
\vdots\\
\alpha_{n,N}        
\end{pmatrix}
= \begin{pmatrix}
f(\theta_1)\\
\vdots\\
f(\theta_N)        
\end{pmatrix}.
\end{equation*}
In terms of the reproducing kernel $Q$, the matrix 
\begin{equation*}
\begin{pmatrix}
\Rea_1u_{1,1} & \cdots & \Rea_1u_{n,N}\\
\vdots &  & \vdots\\
\Rea_{N}u_{1,1} & \cdots & \Rea_{N}u_{n,N} 
\end{pmatrix}
\end{equation*}
can also be written in block form
\begin{equation}\label{eq:def-matrix-Q}
Q:=
\begin{pmatrix}
Q(\theta_1,\theta_1) & \cdots & Q(\theta_1,\theta_N)\\
\vdots &  & \vdots\\
Q(\theta_N,\theta_1) & \cdots & Q(\theta_N,\theta_N)
\end{pmatrix}.
\end{equation}
Recall that,
\begin{equation*}
Q(\theta_l,\theta_k) = \int_0^T e^{A_l(T-s)} B_l B_k^\trans\,e^{A_k^\trans(T-s)} \di{s}
\end{equation*}
and thus $Q$ is just the controllability Gramian \eqref{eq:def-Gramian_parallel}, i.e. $Q$ is positiv definite. Hence, \eqref{eq:lin-eq-RNu=FN} is equally written as $Q\alpha=F_N$ and the real numbers $\alpha_{1,1},..., \alpha_{n,N}$ are given by
\begin{equation*}
\begin{pmatrix}
\alpha_{1,1}\\
\vdots\\
\alpha_{n,N}        
\end{pmatrix}
= Q^{-1}
\begin{pmatrix}
f(\theta_1)\\
\vdots \\
f(\theta_N)
\end{pmatrix}.
\end{equation*}
Plugging this into \eqref{eq:def-u-alpha-v}  we get that 
\begin{equation*}
u_{\parallel N} = \sum_{k=1}^{N} \sum_{i=1}^{n} \alpha_{i,k} \, u_{i,k}
\end{equation*}
is another explicit representation of the minimal $L^2_m([0,T])$-norm  solution to \eqref{eq:lin-eq-RNu=FN}. The next result states conditions under which the solution $u_{\parallel N}$ also approximates the minimal $L^2$-norm solution to $\Rea u=f$. Before we state and prove this, we need some notation. For a matrix $M=(M_{ij}) \in \mathbb R^{n \times n}$ we will use the following matrix norm:
\begin{equation*}
\| M\|_{n,\infty} := n \max_{1\leq i,j\leq n} |M_{ij}|
\end{equation*}
and recall that it is consistent and submultiplicative (cf. \cite[Chapter~5]{horn1990matrix}), i.e. it holds
\begin{equation*}
\|Mx\| \leq \| M\|_{n,\infty} \|x\| \quad \text{ and } \quad \| M\,N\|_{n,\infty} = \| M\|_{n,\infty} \,  \| N\|_{n,\infty} 
\end{equation*}
for all $M,N \in \mathbb R^{n \times n}$ and for all $x\in \mathbb R^n$. Moreover, we say that the reproducing kernel $Q$ is Lipschitz continuous on $\p \times \p$ if there is an $L_Q>0$ such that
\begin{equation*}
\| Q(\eta_1,\tau_1) - Q(\eta_2,\tau_2)\|_{n,\infty} \leq L_Q \left( \,|\eta_1 - \eta_2| + |\tau_1- \tau_2|\,\right)
\end{equation*}
for all $(\eta_1,\tau_1), (\eta_2,\tau_2) \in \p\times \p$. Moreover, let
\begin{equation*}
\Delta_{\max} := \max_{k=1,...,N-1} |\theta_{k+1} -\theta_k|.
\end{equation*}
Then, we have the following refinement of the convergence result obtained in \cite[Theorem~3.1]{nashed_math_comp_1974} and \cite{wahba_1973}.

\begin{theorem}\label{thm:nashed_refine}
Let $(A,B) \in C_{n,n}(\p)\times C_{n,m}(\p)$ and $f \in \I \Rea$. 
\begin{enumerate}
 \item[(a)] Then, it holds
 \begin{align*}
  \lim_{\Delta_N\to 0} \| u_{\parallel N} - \Rea^\sharp f\|_{L^2([0,T],\mathbb R^m)} =0.
 \end{align*}
  \item[(b)] If $Q$ is Lipschitz contiunuous on $\p \times \p$ and there is an $g \in L^2_n(\p)$ such that $\Rea^\sharp f= \Rea^*g$ (i.e. $f \in \Rea \Rea^* (L^2_n(\p)$) then, we have
 \begin{align*}
   \| u_{\parallel N} - \Rea^\sharp f\|_{L^2_m([0,T])} =  
   \sqrt{ \Delta_{\max}  \,  3\,n \left( 4\,M_Q e^{-\tfrac{2}{\Delta_{\max}}} + L_Q\, \sqrt{\Delta_{\max}} \right) } \, \|g\|_{L_n^2(\p)},
 \end{align*}
 where  $M_Q = \max_{i,j =1,...,n} \max_{(\tau,\eta) \in \p\times \p}  |Q(\tau,\eta)_{ij}|$.
\end{enumerate}

\end{theorem}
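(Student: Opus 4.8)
The plan is to reduce both statements to a single geometric quantity: the best-approximation error, measured in the reproducing kernel Hilbert space $(\I\Rea,\langle\cdot,\cdot\rangle_{\I\Rea})$, of $f$ from the finite-dimensional subspace $V_N$. First I would identify the collocation solution with an orthogonal projection. Writing $g_N:=\Rea u_{\parallel N}\in V_N$ and noting that the $k$-th block of $\mathcal I_N u$ equals $(\Rea u)(\theta_k)$, the interpolation conditions $\mathcal I_N u_{\parallel N}=F_N$ read $g_N(\theta_k)=f(\theta_k)$ for $k=1,\dots,N$. Applying the reproducing property to the sections $v_{i,k}=Q(\cdot,\theta_k)\operatorname{e}_i$ converts these $nN$ scalar identities into $\langle v_{i,k},g_N-f\rangle_{\I\Rea}=0$, that is $g_N-f\perp V_N$. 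Since moreover $g_N\in V_N$, this says precisely that $g_N$ is the $\I\Rea$-orthogonal projection of $f$ onto $V_N$. Because $u_{\parallel N}\in U_N\subset U$, the isometry \eqref{eq:convergence_reproducing_witz} then gives
\begin{equation*}
\|u_{\parallel N}-\Rea^\sharp f\|_{L^2_m([0,T])}^2=\|\Rea u_{\parallel N}-f\|_{\I\Rea}^2=\min_{g\in V_N}\|f-g\|_{\I\Rea}^2 ,
\end{equation*}
so everything reduces to controlling this minimum.

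For part (a) I would argue by density. From the reproducing property and the symmetry $Q(\eta,\tau)=Q(\tau,\eta)^\trans$ one obtains
\begin{equation*}
\|Q(\cdot,\tau)v-Q(\cdot,\eta)v\|_{\I\Rea}^2=\big\langle v,\big(Q(\tau,\tau)-Q(\tau,\eta)-Q(\eta,\tau)+Q(\eta,\eta)\big)v\big\rangle_{\mathbb{R}^n},
\end{equation*}
so $\eta\mapsto Q(\cdot,\eta)v$ is continuous from $\p$ into $\I\Rea$, uniformly on $\p$ by compactness and continuity of $Q$. Given $\e>0$, choose a finite combination $g_\e=\sum_{j}Q(\cdot,\sigma_j)c_j$ with $\|f-g_\e\|_{\I\Rea}<\e/2$, which is possible because the sections $Q(\cdot,\tilde\theta)\operatorname{e}_i$ have dense span in $\I\Rea$. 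Replacing each node $\sigma_j$ by a nearest moment $\theta_{k_j}$, at distance at most $\Delta_N$, produces an element of $V_N$, and the uniform continuity above makes the incurred error $<\e/2$ once $\Delta_N$ is small. Hence the best-approximation error tends to $0$, which is (a).

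For part (b) the source condition $\Rea^\sharp f=\Rea^*g$ yields $f=\Rea\Rea^*g$ with $f(\theta)=\int_\p Q(\theta,\tau)g(\tau)\di{\tau}$; since point evaluation is continuous in $\I\Rea$ and commutes with the Bochner integral, this upgrades to the representation $f=\int_{\p}Q(\cdot,\tau)\,g(\tau)\,\di{\tau}$ in $\I\Rea$. As competitor in $V_N$ I would take the piecewise-constant-kernel interpolant
\begin{equation*}
\tilde g=\sum_{k=1}^{N-1}Q(\cdot,\theta_k)\,c_k,\qquad c_k:=\int_{\theta_k}^{\theta_{k+1}}g(\tau)\,\di{\tau},
\end{equation*}
which lies in $V_N$ and uses only averaged values of $g$, crucial since $g\in L^2_n(\p)$ cannot be sampled pointwise. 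Then $f-\tilde g=\int_\p\big(Q(\cdot,\tau)-Q(\cdot,\theta_{k(\tau)})\big)g(\tau)\di{\tau}$, where $\theta_{k(\tau)}$ is the left endpoint of the cell containing $\tau$, and expanding the squared $\I\Rea$-norm by the reproducing property turns it into
\begin{equation*}
\|f-\tilde g\|_{\I\Rea}^2=\int_\p\int_\p\big\langle g(\tau),K(\tau,\sigma)\,g(\sigma)\big\rangle_{\mathbb{R}^n}\,\di{\tau}\,\di{\sigma},
\end{equation*}
with $K(\tau,\sigma)=Q(\tau,\sigma)-Q(\tau,\theta_{k(\sigma)})-Q(\theta_{k(\tau)},\sigma)+Q(\theta_{k(\tau)},\theta_{k(\sigma)})$. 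Bounding $\|K(\tau,\sigma)\|_{n,\infty}$ through the Lipschitz constant $L_Q$ and invoking Cauchy--Schwarz reduces the right-hand side to a multiple of $\|g\|_{L^2_n(\p)}^2$ carrying a power of $\Delta_{\max}$.

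The step I expect to be the main obstacle is precisely this last estimate. The crude bound $\|K\|_{n,\infty}\le 2L_Q\Delta_{\max}$, applied uniformly over the whole square, together with $\big(\int_\p\|g\|\big)^2\le|\p|\,\|g\|_{L^2_n(\p)}^2$, only produces the rate $\Delta_{\max}^{1/2}$, whereas the stated bound is of order $\Delta_{\max}^{3/4}$. The delicate point is therefore the treatment of the mixed second difference $K$ away from the diagonal: one must split the square into a near-diagonal band, where the Lipschitz bound contributes the term $L_Q\sqrt{\Delta_{\max}}$, and the complementary region, where the boundedness $M_Q$ of the kernel together with the decay of the off-diagonal cross-correlations yields the much smaller correction $4\,M_Q e^{-2/\Delta_{\max}}$. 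Organising this decomposition so that the two contributions assemble into the factor $3\,n\big(4\,M_Q e^{-2/\Delta_{\max}}+L_Q\sqrt{\Delta_{\max}}\big)$, while $\|g\|_{L^2_n(\p)}$ remains the only trace of $g$, is the technical heart of the proof.
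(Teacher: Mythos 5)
Your reduction of both parts to the best--approximation error $\min_{\tilde f\in V_N}\|f-\tilde f\|_{\I\Rea}$ is exactly the paper's strategy: the identity $\Rea u_{\parallel N}=P_{V_N}f$ (which the paper derives from $u_{\parallel N}=P_{U_N}\Rea^\sharp f$ together with the isometry, while you read it off directly from the interpolation conditions and the reproducing property --- both routes are correct) combined with \eqref{eq:convergence_reproducing_witz} gives part (a) by the same density/uniform-continuity argument the paper imports from Kress. Up to and including part (a) your proposal is sound and essentially identical to the paper's proof.

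Part (b) is where the proposal has a genuine gap, and you have diagnosed the symptom correctly but guessed the wrong cure. Your competitor $\tilde g=\sum_k Q(\cdot,\theta_k)c_k$ with cell averages $c_k$ and the resulting mixed second difference $K(\tau,\sigma)$ give only $\|f-\tilde g\|_{\I\Rea}\le\sqrt{2L_Q|\p|\,\Delta_{\max}}\,\|g\|_{L^2_n(\p)}$, i.e. the rate $\Delta_{\max}^{1/2}$ instead of the stated $\Delta_{\max}^{3/4}$; and the repair you sketch --- a near/far-diagonal splitting exploiting ``decay of the off-diagonal cross-correlations'' --- has no basis in the hypotheses, since $Q$ is only assumed bounded and Lipschitz and the term $4M_Qe^{-2/\Delta_{\max}}$ does not originate from any kernel decay. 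The paper's bound rests on two ingredients absent from your argument. First, it takes piecewise-\emph{linear} (hat-function) weights $p_k$, so the error kernel decomposes into three one-dimensional linear-interpolation errors of the Lipschitz function $Q$, each bounded entrywise via the Bernstein-polynomial estimate of Gzyl--Palacios by $4e^{-2/\Delta_{\max}}M_Q+L_Q\sqrt{\Delta_{\max}}$; this is where the exponential term comes from (an artifact of the large-deviations proof of Weierstrass's theorem) and why the Lipschitz constant enters with $\sqrt{\Delta_{\max}}$ rather than your $\Delta_{\max}$. Second, and decisive for the power of $\Delta_{\max}$, the final Cauchy--Schwarz step is carried out blockwise over the diagonal cells $\p_\nu\times\p_\nu$, so the measure factor is $|\p_\nu|\le\Delta_{\max}$ rather than $|\p|$; this is precisely what upgrades $C\,|\p|\,\|g\|^2$ to $C\,\Delta_{\max}\,\|g\|^2$. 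To complete your proof along these lines you would need to supply this localization (and justify discarding, or separately estimating, the off-diagonal blocks $\p_\nu\times\p_\mu$, $\nu\ne\mu$, which do not vanish for the error kernel as written); without it, no choice of the weights $p_k$ will produce the stated rate.
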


\begin{proof}(a): In a first step we show that $u_{\parallel N}$ satisfies
\begin{equation}\label{eq:1st_prop}
\Rea u_{\parallel N} =  P_{V_N} f,
\end{equation}
where $P_{V_N}$ denotes the orthogonal projection in $\I \Rea$ onto $V_N$.  To see this, we shall show that
\begin{equation*}
\langle \Rea u_{\parallel N} - f, v\rangle_{\I \Rea} = 0 \qquad \forall \, \, v \in V_N.
\end{equation*} 
Recall that, by construction it holds $u_{\parallel N}= I_N^\sharp F_N = P_{U_N} \Rea^\sharp f$, i.e.
\begin{equation*}
\langle u_{\parallel N} - \Rea^\sharp f , u \rangle_{L^2_m([0,T])} = 0 \qquad \forall \, \, u \in U_N.
\end{equation*} 
Let $v \in V_N$. Since $\Rea\colon U_N \to V_N$ is isometric, there is an $u \in U_N$ so that $\Rea u =v$ and since $\R \R^\sharp f=f$ we have
\begin{align*}
\langle \Rea u_{\parallel N} -  f , v \rangle_{\I \Rea} &= \langle \Rea u_{\parallel N} -  f , \Rea u \rangle_{\I \Rea} = \langle \Rea (u_{\parallel N} -  \Rea^\sharp f) , \Rea u \rangle_{\I \Rea} \\
&= \langle u_{\parallel N} - \Rea^\sharp f , u \rangle_{L^2_m([0,T])} =0.
\end{align*} 
Let 
\begin{align*}
V := \operatorname{span}\left\{ v_{1,\tilde \theta},...,v_{n,\tilde \theta}\, \, | \, \, \tilde \theta \in \bigcup_{N \in \mathbb N} \{\theta_1,...,\theta_N\} \right\}. 
\end{align*}
Then, as $Q$ is continuous and $\lim_{N \to \infty} \Delta_N =0$, it follows that $V$ is dense in $\I \Rea$, cf. \cite[Proof of Theorem~17.13]{kress}. Thus, using \eqref{eq:convergence_reproducing_witz} and \eqref{eq:1st_prop} we have
 \begin{align*}
\lim_{\Delta_N \to 0} \| u_{\parallel N} - \Rea^\sharp f\|_{L^2_m([0,T])}  &= 
\lim_{\Delta_N \to 0}  \| \Rea u_{\parallel N} -  f\|_{\I \Rea} \\
&=\lim_{\Delta_N \to 0}  \| \pro_{V_N} f -  f\|_{\I \Rea} =0.
 \end{align*}

(b): The assumption $f \in \Rea \Rea^* (L^2_n(\p))$ implies that there is a function $g=\begin{pmatrix} g_1\\ \vdots \\ g_N \end{pmatrix}                                                                                                                        \in L^2_n(\p)$ such that
  \begin{equation}\label{eq:f_Q_g}
f (\theta) = \int_\p Q(\theta,\eta) g(\eta) \di{\eta}.
\end{equation}
Since for any $\tilde f \in V_N$ it holds
 \begin{align*}
\| f - \pro_{V_N} f \|_{\I \Rea} \leq \|f- \tilde f\|_{\I \Rea},
 \end{align*}
the verification of (b) is simply to find appropriate functions $p_k \colon \p \to \mathbb R$, $k=1,...,N$ such that the function
 \begin{align*}
\tilde f (\cdot) := \sum_{k=1}^N Q(\cdot,\theta_k) c_k, \qquad c_k = \begin{pmatrix}
                                                                     \int_\p p_k(\tau) g_1(\tau) \di{\tau}\\
                                                                     \vdots\\
                                                                     \int_\p p_k(\tau) g_n(\tau) \di{\tau}
                                                                    \end{pmatrix}
 \end{align*}
is in $V_N$ and $\|f- \tilde f\|_{\I \Rea}$ can be bounded from above.

It holds,
 \begin{align*}
\|f- \tilde f\|^2_{\I \Rea} = \langle f, f \rangle_{\I \Rea} -2 \langle  f, \tilde f \rangle_{\I \Rea} + \langle \tilde f , \tilde f \rangle_{\I \Rea}.
 \end{align*}
Next, we consider the terms individually. By assumption it holds $\Rea^\sharp f= \Rea^*g$ and one has
 \begin{align*}
\langle f, f \rangle_{\I \Rea}  &= \langle \Rea^\sharp f, \Rea^\sharp f \rangle_{L^2_m([0,T])} =  \langle \Rea^*g, \Rea^*g \rangle_{L^2_m([0,T])} \\
& = \int_0^T \left( \int_\p B(\tau)^\trans e^{A(\tau)^\trans(T-s)} g(\eta) \di{\tau} \right)^\trans  \left( \int_\p B(\eta)^\trans e^{A(\eta)^\trans(T-s)} g(\eta) \di{\eta} \right)                    \di{s}\\
&=\int_\p   \int_\p g(\tau)^\trans  \left( \int_0^T e^{A(\tau)(T-s)}  B(\tau)   B(\eta)^\trans e^{A(\eta)^\trans(T-s)}  \di{s}\right)           g(\eta)     \di{\tau}    \di{\eta}   \\
&=\int_\p   \int_\p g(\tau)^\trans  Q(\tau,\eta)           g(\eta)     \di{\tau}    \di{\eta}.
 \end{align*}
Furthermore, exploiting the reproducing kernel structure and \eqref{eq:f_Q_g} we get
 \begin{align*}
\langle f, \tilde f \rangle_{\I \Rea}  &= \sum_{k=1}^N \langle f,  Q(\cdot,\theta_k) c_k\rangle_{\I \Rea} 
= \sum_{k=1}^N \langle f(\theta_k),   c_k\rangle_{\mathbb R^n} = \sum_{k=1}^N  c_k^\trans f(\theta_k)   \\
&=\sum_{k=1}^N \int_\p \int_\p p_k(\tau) g(\tau)^\trans Q(\theta_k,\eta) g(\eta) \di{\tau} \di{\eta}
 \end{align*}
and
 \begin{align*}
\langle \tilde f, \tilde f \rangle_{\I \Rea} &= 
\sum_{k=1}^N \sum_{l=1}^N  \left\langle   Q(\cdot,\theta_k) c_k,   Q(\cdot,\theta_l) c_l  \right\rangle_{\I \Rea} 
= \sum_{k=1}^N     \sum_{l=1}^N   \langle    c_k,   Q(\theta_k,\theta_l) c_l \rangle_{\mathbb R^n} \\
&= \int_\p \int_\p \sum_{k=1}^N \sum_{l=1}^N   g(\tau)^\trans p_k(\tau) Q(\theta_k,\theta_l)p_l(\eta) g(\eta) \di{\tau}\di{\eta}.
 \end{align*}
This yields
 \begin{multline*}
\|f- \tilde f\|^2_{\I \Rea} = \\
\int_\p \int_\p  g(\tau)^\trans \hspace{-.1cm}
\left\{ \hspace{-.1cm} Q(\tau,\eta) - \sum_{k=1}^N \Big( 2 \, p_k(\tau)  Q(\theta_k,\eta) - \sum_{l=1}^N 
p_k(\tau) Q(\theta_k,\theta_l)p_l(\eta) \Big)
\right\}
g(\eta) \di{\tau} \di{\eta}.
 \end{multline*}
 
In the next step we construct linear interpolations of the matrices $Q(\eta,\tau)$. To this end, we partition the interval $\p$ according to the moments
 \begin{align*}
\p = [\theta_1,\theta_2] \cup [\theta_2,\theta_3] \cup \cdots \cup [\theta_{N-1},\theta_N] =: \p_1 \cup \cdots \cup \p_{N-1}.
 \end{align*}
Then, on each subinterval $\p_k$ we define the polynomials $l_{k,1}$ and $l_{k,2}$ as follows
\begin{align}
 \begin{split}
  l_{k,1}(\theta) = 
  \frac{\theta - \theta_{k+1}} {\theta_{k}-\theta_{k+1}} 
                \qquad \text{ and } \qquad 
        l_{k,2}(\theta) = 
  \frac{\theta - \theta_{k}} {\theta_{k+1}-\theta_{k}} .         
\end{split}
\end{align}
Thus, on $\p_k$ and for fixed $\eta \in \p$ the linear interpolation of the matrix-valued function $\tau \mapsto Q(\tau,\eta)$ is given by the matrix-valued Lagrange polynomial
\begin{align*}
 \tau  \mapsto B_{1,Q}(\tau,\eta) := l_{k,1}(\tau) \,  Q(\theta_k,\eta) + l_{k,2}(\tau) \, Q(\theta_{k+1},\eta).  
\end{align*}
The notation $B_{1,Q}$ is intended to express that the interpolation polynomial is equal to the first Bernstein polynomial. Hence, for fixed $\eta>0$ and $\tau \in \p_k$, as shown in the proof of Theorem~1 in \cite[Eq.~(4)]{gzyl1997} we get for each entry $Q(\tau,\eta)_{ij}$ the following estimate
\begin{align*}
| Q(\tau,\eta)_{ij} -B_{1,Q}(\tau, \eta)_{ij}| \leq 4 \, e^{- \tfrac{2}{\theta_{k+1} -\theta_k }} \,\max_{\tau \in \p_k} |Q(\tau,\eta)_{ij}| \,  + L_Q \sqrt{\theta_{k+1}-\theta_k},
\end{align*}
where $L_Q>0$ denotes the Lipschitz constant of $Q$. In order to apply the previous estimation, we define for $k=1,...,N$ the functions
 \begin{align}
 \begin{split}
  p_{k}(\theta) = \begin{cases}l_{k,1}(\theta)
  &\quad \theta  \in \p_k\\
                 0 &\quad  \theta \not \in \p_k 
                \end{cases}
             \quad \text{ and }   \quad 
        p_{k+1}(\theta) = \begin{cases}l_{k,2}(\theta)
   &\quad \theta  \in \p_k\\
                 0 &\quad  \theta \not \in \p_k .
                \end{cases}          
\end{split}
\end{align}
Then, for $\tau \in \p_k$ and $\eta \in \p_l$ it holds
 \begin{align*}
 Q(\tau,\eta) &- \sum_{k=1}^N \left( 2 \, p_k(\tau)  Q(\theta_k,\eta) - \sum_{l=1}^N 
p_k(\tau) Q(\theta_k,\theta_l)p_l(\eta) \right)  = \\
Q(\tau,\eta) &-    l_{k,1}(\tau) \, Q(\theta_k,\eta) - l_{k,2}(\tau) \,  Q(\theta_{k+1},\eta) \\[2ex]
&-l_{k,1}(\tau) \Big( Q(\theta_k,\eta) -   l_{l,1}(\eta) Q(\theta_k,\theta_l) - l_{l,2}(\eta) Q(\theta_k,\theta_{l+1}) \Big) \\[2ex]
&-l_{k,2}(\tau) \Big( Q(\theta_{k+1},\eta) -   l_{l,1}(\eta) Q(\theta_{k+1},\theta_l) - l_{l,2}(\eta) Q(\theta_{k+1},\theta_{l+1})\Big). 
\end{align*}
Defining 
\[
 M_Q = \max_{i,j =1,...,n} \max_{(\tau,\eta) \in \p\times \p}  |Q(\tau,\eta)_{ij}|
\]
one has for each entry the estimate
 \begin{align*}
 &\left| Q(\tau,\eta)_{ij}  - \sum_{k=1}^N  \left( 2 \,  p_k(\tau)  Q(\theta_k,\eta)_{ij}   - \sum_{l=1}^N 
 p_k(\tau) Q(\theta_k,\theta_l)_{ij} p_l(\eta)\right)\right|  \\ 
&\leq | Q(\tau,\eta)_{ij} -    l_{k,1}(\tau) \, Q(\theta_k,\eta)_{ij} - l_{k,2}(\tau) \,  Q(\theta_{k+1},\eta)_{ij}|\\
&\quad + | l_{k,1}(\tau) |\, | Q(\theta_k,\eta)_{ij} -   l_{l,1}(\eta) Q(\theta_k,\theta_l)_{ij} - l_{l,2}(\eta) Q(\theta_k,\theta_{l+1} )_{ij} |\\
  &\quad+ | l_{k,2}(\tau) |\, |  Q(\theta_{k+1},\eta)_{ij} -   l_{l,1}(\eta) Q(\theta_{k+1},\theta_l)_{ij} - l_{l,2}(\eta) Q(\theta_{k+1},\theta_{l+1} )_{ij}|\\
&\leq 12\,e^{-\tfrac{2}{\Delta_{\max}}   } \,M_Q\,  + 3\,L_Q \,\sqrt{\Delta_{\max}}.
\end{align*}
That is, using $C= 3\,n \left( 4\,M_Q e^{-\tfrac{2}{\Delta_{\max}}   } + L_Q \,\sqrt{\Delta_{\max}} \right)$ for a moment, we have
 \begin{align*}
 \Bigg\|  Q(\tau,\eta) &- \sum_{k=1}^N \left( 2 \, p_k(\tau)  Q(\theta_k,\eta) - \sum_{l=1}^N 
p_k(\tau) Q(\theta_k,\theta_l)p_l(\eta) \right) \Bigg\|_{n,\infty} 
\leq C.
\end{align*}
Then, using the Cauchy-Schwarz inequality we get 
 \begin{align*}
\|f- \tilde f\|^2_{\I \Rea} &\leq 
\sum_{\nu=1}^N  \int_{\p_\nu} \int_{\p_\nu}  \|g(\tau)\|_{\mathbb R^n} \,C\, \|g(\eta)\|_{\mathbb R^n}  \di{\tau} \di{\eta}\\
& \leq \sum_{\nu=1}^N\,C  
\left( \int_{\p_\nu}\, \|g(\tau)\|_{\mathbb R^n}^2\,\di{\tau} \right)^{\frac{1}{2}}
 \left( \int_{\p_\nu}\, 1\,\di{\eta} \right)   \left( \int_{\p_\nu}\, \|g(\eta)\|_{\mathbb R^n}^2\,\di{\eta} \right)^{\frac{1}{2}}  \\
& \leq  C\, \Delta_{\max} \,  \sum_{\nu=1}^N   \left(\int_{\p_\nu}   \|g(\tau)\|_{\mathbb R^n}^2 \di{\tau}\right)   \leq C\, \Delta_{\max} \|g\|_{L_n^2(\p)}^2
 \end{align*}
and finally
 \begin{align*}
\|f- \tilde f\|^2_{\I \Rea}  \leq    \Delta_{\max}  \,  3\,n \left( 4\,M_Q e^{-\tfrac{2}{\Delta_{\max}}} + L_Q\, \sqrt{\Delta_{\max}} \right)  \, \|g\|^2_{L_n^2(\p)}.
 \end{align*}
This shows the assertion.
\end{proof}

\medskip

Note that, since we neither used the particular kernel $K(\theta,s)= e^{A(\theta)(T-s)}B(\theta)$ of the reachability operator nor that the operator is compact, Theorem~\ref{thm:nashed_refine} is valid for general integral equations of first kind with a continuous kernel $K(\theta,s)$ and a Lipschitz continuous reproducing kernel 
\begin{equation*}
 Q(\theta,\tilde \theta) = \int_0^T K(\theta,s) K(\tilde \theta,s)^\trans \di{s}.
\end{equation*}
We will now formulate a different version of the result, which is more convenient for the purpose of this paper.  Since  for $f \in \I \Rea$ it holds $ \Rea \Rea^\sharp f=f$ one has
\begin{equation*}
 \|\Rea u_N -f\|_{\infty} \leq \|\Rea\|\,  \|u_N - \Rea^\sharp f\|_{L^2_m([0,T])},
\end{equation*}
where $\|\mathcal{R}\|$ denotes the operator norm of the reachability operator $\Rea$ defined in \eqref{eq:def_reachability_operator}. Then, for given $\e>0$ and appropriate $f$ Theorem~\ref{thm:nashed_refine} yields the subsequent lower bound on the number of moment parameters that are required such that $u_{\parallel N}$ is a suitable input. 

\begin{theorem}\label{thm:ensemble_nashed}
Suppose that $(A,B) \in C^1_{n,n}(\p) \times C^1_{n,m}(\p)$ is uniformly ensemble controllable. Let $\e>0$ and $f\in \I \Rea$ such that there is a function $g \in L_n^2(\p)$ with $\Rea^\sharp f = \Rea^*g$. Then, the input $u_{\parallel N}$ defined in \eqref{eq:u_parallel} satisfies 
\begin{equation*}
 \|\Rea u_{\parallel N} - f\|_{\infty} \leq \e
\end{equation*} 
if the moments $\theta_1 < \cdots < \theta_N$ are chosen such that 
$$\Delta_{\max} < \tfrac{\e^2}{\|\Rea\|^2\, \|g\|_{L_n^2(\p)}^2 \, 3\,n \,\big( 4\,M_Q e^{-\tfrac{2}{\theta_N-\theta_1}} + L_Q \sqrt{|\theta_N- \theta_1|} \big) } .$$
\end{theorem}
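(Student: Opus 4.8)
The plan is to read off the statement as a quantitative corollary of Theorem~\ref{thm:nashed_refine}(b), combining it with the operator-norm estimate recorded immediately after that theorem and then removing the self-referential appearance of $\Delta_{\max}$ so as to obtain a condition one can actually check.

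First I would reduce the uniform error to an $L^2$-error. Because $f \in \I\Rea$ gives $\Rea\Rea^\sharp f = f$, the inequality stated just after Theorem~\ref{thm:nashed_refine} yields
\begin{equation*}
 \|\Rea u_{\parallel N} - f\|_{\infty} \le \|\Rea\|\,\|u_{\parallel N} - \Rea^\sharp f\|_{L^2_m([0,T])},
\end{equation*}
so it suffices to bound the $L^2$-distance on the right. For this I would apply Theorem~\ref{thm:nashed_refine}(b), and here one must verify its two hypotheses. The function $g$ with $\Rea^\sharp f = \Rea^*g$ is supplied by assumption, so the only place the extra $C^1$ regularity of $(A,B)$ enters is the Lipschitz continuity of $Q$: differentiating under the integral in the definition of $Q$ shows $Q \in C^1(\p\times\p)$, whence $Q$ is Lipschitz on the compact set $\p\times\p$ with some constant $L_Q>0$. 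Inserting the rate from Theorem~\ref{thm:nashed_refine}(b) then gives
\begin{equation*}
 \|\Rea u_{\parallel N} - f\|_{\infty} \le \|\Rea\|\,\sqrt{\Delta_{\max}\,3n\big(4M_Q e^{-2/\Delta_{\max}} + L_Q\sqrt{\Delta_{\max}}\big)}\,\|g\|_{L_n^2(\p)}.
\end{equation*}

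Squaring and demanding that the right-hand side stay below $\e^2$ produces an inequality in which $\Delta_{\max}$ appears both as the leading factor and inside the bracket, so it cannot be solved for $\Delta_{\max}$ as it stands; decoupling these two occurrences is the crux of the argument. I would use that $\Delta \mapsto 4M_Q e^{-2/\Delta} + L_Q\sqrt{\Delta}$ is monotonically increasing on $(0,\infty)$ together with the elementary observation that every gap is dominated by the total width, i.e. $\Delta_{\max} \le \theta_N - \theta_1$. Replacing $\Delta_{\max}$ by $\theta_N-\theta_1$ inside the bracket therefore only enlarges it and leaves a right-hand side that is linear in the remaining $\Delta_{\max}$; requiring that linear quantity to be $<\e^2$ gives exactly the stated threshold, and hence $\|\Rea u_{\parallel N}-f\|_\infty < \e$, which is the claim. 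The main obstacle is precisely this monotonicity-and-relaxation step, since it is what converts the sharp but implicit estimate of Theorem~\ref{thm:nashed_refine}(b) into an explicit, verifiable upper bound on the admissible moment spacing.
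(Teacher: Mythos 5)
Your proposal is correct and follows exactly the route the paper intends: the paper gives no separate proof of this theorem, merely combining the displayed inequality $\|\Rea u_{\parallel N}-f\|_\infty\le\|\Rea\|\,\|u_{\parallel N}-\Rea^\sharp f\|_{L^2_m([0,T])}$ with Theorem~\ref{thm:nashed_refine}(b), and your argument supplies precisely the two details left implicit there, namely that $C^1$-regularity of $(A,B)$ yields Lipschitz continuity of $Q$ on the compact set $\p\times\p$, and that the monotonicity of $\Delta\mapsto 4M_Qe^{-2/\Delta}+L_Q\sqrt{\Delta}$ together with $\Delta_{\max}\le\theta_N-\theta_1$ converts the implicit bound into the stated explicit threshold. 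No gaps.
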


\section{Consensus methods}\label{sec:consensus}

In this section, we present additional methods to obtain solutions to the linear equation
\begin{equation*}
\mathcal{I}_N u = F_N .
\end{equation*}
Recall that uniform ensemble controllability implies that each finite-dimensional moment system $(A_i,B_i), i=1,...,N$ is controllable, cf. \cite{helmke2014uniform}. Thus, for each $f_i:=f(\theta_i)$ there is an open-loop input $u_i \in L^2_m([0,T])$ steering the origin to $f_i$. Using the controllability Gramian
 \begin{equation*}
W_i = \int_0^T e^{A_i\,(T-s)} B_i B_i^\trans e^{A_i^\trans\,(T-s)} \, \di{s}
\end{equation*}
the $L^2$-norm minimal input function $u_i$ is given by
 \begin{equation}\label{eq:moment-input}
u_i = B_i^\trans e^{A_i^\trans\,(T-\cdot)}\, W_i^{-1} f_i.
\end{equation}
We will now use the moment input functions $u_i$, $i=1,...,N$ to derive a global input function which is an approximate solution to \eqref{eq:lin_op_eq}. We note that the subsequent methods do not require the inversion of the matrix $Q$ defined in \eqref{eq:def-matrix-Q}.

\subsection{Projected gradient method}

In order to define our method, we consider for $k=1,...,N$ the operators $\R_k \colon L^2_m([0,T]) \to \mathbb{R}^{n}$,
\begin{equation*}
\R_k u  =
\int_0^T  e^{A_k(T-s)} B_k\, u(s) \di{s}
\end{equation*}
and their adjoints  $\R_k^* \colon \mathbb{R}^{n}  \to L^2_m([0,T])$,
\begin{equation*}
\R_k^*v= B_k^\trans e^{A^\trans_k(T-s)}v.
\end{equation*}
For $i=1,...,N$ we consider the system of differential equations
\begin{align}\label{eq:Bolte_i}
\dot u_i(t) = \frac{1}{N} \sum_{k=1}^N \big( u_k(t)-u_i(t))  - \frac{1}{N} \sum_{k=1}^N \R_k^*\R_k u_k(t) - \R_k^*f_k
\end{align}
defined on the Hilbert space $L^2_m([0,T])$. As initial values $u_i(0)$ we pick the known moment control input functions $u_{i}$ defined in \eqref{eq:moment-input}. The first part of the right-hand side is a consensus term and the second part is a gradient term so that $u_i(t)$ converges to a solution of $\R_i u=f_i$.  Before stating and proving the convergence result, we first recall that a sequence $(u_n)_{n\in \mathbb N}$ in $ L^2_m([0,T])$ is said to converge weakly to $u$ if for all $v \in L^2_m([0,T])$ it holds
\begin{equation*}
\lim_{n\to \infty} \langle u_n ,  v \rangle_{L^2_m([0,T])} = \langle u,  v \rangle_{L^2_m([0,T])}.
\end{equation*}
We shortly write $u_n\rightharpoonup u$ if $(u_n)_{n\in \mathbb{N}}$ converges weakly to $u$. The convergence result is as follows.

\medskip
\begin{theorem}
For each $i=1,...,N$ the solution of the differential equation \eqref{eq:Bolte_i} exists for all $t\geq 0$ and converges weakly to a global solution, i.e. for all $i=1,...,N$ one has
\begin{equation*}
 u_i(t) \rightharpoonup u \qquad \text{ as }\quad  t \to \infty
\end{equation*}
for some $u \in L^2_m([0,T])$. In particular, we have 
$$\mathcal{I}_Nu=
\begin{pmatrix}
\R_1 u\\
 \vdots\\
\R_N u\end{pmatrix}
=
F_N.$$
\end{theorem}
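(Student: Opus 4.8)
The plan is to recognize the coupled system \eqref{eq:Bolte_i} as a gradient-consensus flow of the type studied in the decentralized-optimization literature cited in the introduction, and to establish convergence by exhibiting a suitable Lyapunov structure on the product Hilbert space $\mathcal{H}:=\big(L^2_m([0,T])\big)^N$. Writing $U=(u_1,\dots,u_N)\in\mathcal{H}$, the right-hand side of \eqref{eq:Bolte_i} decomposes as $-LU$ (the consensus/Laplacian part, where $L$ is the scaled graph Laplacian of the complete graph tensored with the identity) minus the separable gradient term whose $i$th block is $\frac1N(\mathcal{R}_i^*\mathcal{R}_i u_i-\mathcal{R}_i^*f_i)=\frac1N\nabla_{u_i}\big(\tfrac12\|\mathcal{R}_iu_i-f_i\|^2_{\mathbb{R}^n}\big)$. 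Thus \eqref{eq:Bolte_i} is the negative gradient flow, relative to the consensus constraint, of the convex functional $\Phi(U)=\frac1{2N}\sum_{k=1}^N\|\mathcal{R}_ku_k-f_k\|^2_{\mathbb{R}^n}$ on the consensus subspace $\mathcal{C}=\{U:u_1=\dots=u_N\}$, which I would use as the candidate Lyapunov function.

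First I would establish global existence for all $t\ge 0$: the vector field is affine in $U$ (each $\mathcal{R}_k^*\mathcal{R}_k$ is bounded and self-adjoint, each $\mathcal{R}_k^*f_k$ is a fixed element of $L^2_m([0,T])$), hence globally Lipschitz, so the Cauchy–Lipschitz theorem in the Hilbert space $\mathcal{H}$ yields a unique global mild/classical solution. Second, I would show the \emph{consensus} is asymptotically reached, i.e. $u_i(t)-u_j(t)\to 0$: projecting \eqref{eq:Bolte_i} onto the orthogonal complement of $\mathcal{C}$ and using that the Laplacian $L$ of the complete graph has a spectral gap (its nonzero eigenvalue on $\mathcal{C}^\perp$ is bounded below by $1$ after the $\frac1N$ scaling), one gets that the disagreement component contracts exponentially up to the bounded gradient forcing; a Grönwall estimate then pins it to $0$ in the limit. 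Third, on the consensus component I would track the common average $\bar u(t)=\frac1N\sum_k u_k(t)$ and show that $\Phi$ is nonincreasing along the averaged dynamics, so $\bar u(t)$ behaves like a gradient descent on the convex coercive-on-$\mathcal{C}$ residual $u\mapsto\frac1{2N}\sum_k\|\mathcal{R}_ku-f_k\|^2=\frac1{2N}\|\mathcal{I}_Nu-F_N\|^2$, whose minimizer is exactly a solution of $\mathcal{I}_Nu=F_N$ (this set is nonempty and affine since the parallel connection \eqref{eq:collo-def-parallel-connetion} is controllable). The combination forces all $u_i(t)$ to converge to a common limit $u$ with $\mathcal{I}_Nu=F_N$.

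The main obstacle, and the reason the statement only claims \emph{weak} convergence, is that in the infinite-dimensional space $L^2_m([0,T])$ a decreasing Lyapunov function and bounded trajectories give only weak subsequential limits: the operators $\mathcal{R}_k^*\mathcal{R}_k$ have infinite-dimensional kernels, so $\Phi$ is not coercive on $L^2_m([0,T])$ and one cannot upgrade to norm convergence by compactness. Here I expect the argument to invoke an Opial-type lemma: one shows the trajectory is bounded, that $\|\mathcal{R}_ku_k(t)-f_k\|\to 0$ for each $k$, and that $\|u_i(t)-v\|$ is eventually monotone (or that the Fejér-monotone property holds relative to the solution set of $\mathcal{I}_Nu=F_N$), whence every weak cluster point is a solution and, by Opial's lemma, the whole net converges weakly to a single such $u$. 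Verifying the Fejér-monotonicity and the demiclosedness needed for Opial's lemma — rather than the formal Lyapunov computation — is the delicate step.
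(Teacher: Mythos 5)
Your overall reading of \eqref{eq:Bolte_i} as a consensus-plus-gradient flow for the quadratic residual is the right starting point, and the paper's proof begins the same way: it introduces the consensus subspace $C=\{u_1=\cdots=u_N\}$, the functional $V(u)=\sum_k\tfrac12\|\R_k u_k-f_k\|^2_{\mathbb R^n}$ and the orthogonal projection $P_C$. But there are two problems with your version. First, you misread the second sum in \eqref{eq:Bolte_i}: the gradient forcing in the $i$th equation is $\tfrac1N\sum_{k=1}^N(\R_k^*\R_k u_k-\R_k^*f_k)$, the \emph{same averaged} quantity for every $i$, i.e.\ the $i$th block of $P_C\nabla V(u)$, not the block-separable term $\tfrac1N(\R_i^*\R_i u_i-\R_i^*f_i)$ that you write down. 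This is not cosmetic: it is exactly what makes the stacked system collapse to
\begin{equation*}
\dot u(t) = -u(t) + P_C\bigl(u(t)-\nabla V(u(t))\bigr),
\end{equation*}
the continuous gradient projection dynamics, after which the paper finishes in one line by invoking Theorem~3.2 of \cite{bolte_JOTA_2003}, which is stated for precisely this flow. (The correct reading also makes your consensus step trivial: subtracting the $i$th and $j$th equations, the gradient terms cancel identically and $u_i-u_j$ decays like $e^{-t}$ with no forcing at all.)

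Second, and more seriously, the part of the theorem that actually requires work --- weak convergence of the \emph{whole} trajectory to a single limit, rather than weak subsequential limits lying in the solution set --- is exactly the Opial/Fej\'er argument you defer at the end of your proposal. Without verifying Fej\'er monotonicity with respect to the (nonempty, closed, affine) solution set of $\mathcal{I}_N u=F_N$ and the demiclosedness needed for Opial's lemma, the proof is incomplete; this is the entire content of the cited Bolte theorem, not a routine add-on. Note also that your Gr\"onwall step does not do what you claim: exponential contraction ``up to a bounded gradient forcing'' only yields that the disagreement remains bounded by the size of the forcing, not that it tends to $0$; you would additionally need the forcing on the disagreement component to vanish asymptotically (for the equation as actually written it is identically zero, which is the clean way out). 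The most economical repair is the paper's route: rewrite the system in the gradient-projection form above and cite the ready-made Hilbert-space convergence theorem.
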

\medskip
{\it Proof.} First we define the consensus subspace
\begin{equation*}
C=\{ u=(u_1,...,u_N) \, |\, u_1=\cdots = u_N \},
\end{equation*}
which is nonempty, closed and convex. Furthermore, let $V \colon L^2_m([0,T])^N \to [0,\infty)$,
\begin{equation*}
V(u) = \sum_{k=1}^N \frac{1}{2} \| \R_k u_k - f_k\|_{\mathbb{R}^n}^2. 
\end{equation*}
The functional $V$ is Frech\'{e}t differentiable with
\begin{equation*}
\nabla V(u) = 
\begin{pmatrix}
\R_1^*\R_1 u_1 - \R_1^*f_1\\
\vdots\\
\R_N^*\R_N u_N - \R_N^*f_N
\end{pmatrix}.
\end{equation*}
Next, we note that the orthogonal projection $P_C \colon L^2_m([0,T])^N \to C$ is given by
\begin{equation*}
P_C\,u= 
\begin{pmatrix}
\tfrac{1}{N} ( u_1 +  \cdots + u_N) \\
\vdots\\
\tfrac{1}{N} ( u_1 +  \cdots + u_N) 
\end{pmatrix}.
\end{equation*}
Using this notation and stacking the equations \eqref{eq:Bolte_i} for $i=1,...,N$ we get
\begin{equation}\label{eq:Bolte_global}
\dot u(t) = -u(t) + P_C\left( u(t) - \nabla V\big( u(t)\big) \right).
\end{equation}
The assertion then follows from \cite[Theorem~3.2]{bolte_JOTA_2003}. 
\hfill $\Box$

\medskip

In a next step we modify the system of differential equations \eqref{eq:Bolte_i} to get strong convergence of the corresponding trajectories. To this end, let $\eta:[0,\infty) \to [0,\infty)$ be a nonincreasing continuously differentiable function with bounded derivative satisfying 
\begin{equation}\label{eq:eta}
\lim_{t\to \infty} \eta(t) =0, \qquad \int_0^\infty \eta(t) \di{t}= \infty.
\end{equation}
Then, for $i=1,...,N$ we consider the modified ordinary differential equations
\begin{align}\label{eq:Bolte_i-eta}
\dot u_i(t) = \frac{1}{N} \sum_{k=1}^N \Big( \big(1-\eta(t)\big)u_k(t)-u_i(t) \Big)  - \frac{1}{N} \sum_{k=1}^N \R_k^*\R_k u_k(t) - \R_k^*f_k.
\end{align}
As initial conditions we pick one of the moment input functions given in \eqref{eq:moment-input}, i.e 
$$u_i(0)= u_0  \in \left\{u_1(s) = B_1^\trans e^{A_1^\trans\,(T-s)}\, W_1^{-1} f_1,..., u_N(s) = B_N^\trans e^{A_N^\trans\,(T-s)}\, W_N^{-1} f_N\right\}.$$
Collocating for $i=1,...,N$, the equations \eqref{eq:Bolte_i-eta} can be written as
\begin{equation}\label{eq:Bolte_global-eta}
\dot u(t) = -u(t) + P_C\left( u(t) - \nabla V\big( u(t)\big) -\eta(t)\,u(t)\right).
\end{equation}
Using \cite[Theorem~5.1]{bolte_JOTA_2003} we conclude the following strong convergence result.
\medskip
\begin{theorem}
For each $i=1,...,N$ the solution of the differential equation \eqref{eq:Bolte_i-eta} exists for all $t\geq 0$ and converges strongly to a global solution, i.e. for all $i=1,...,N$ one has
\begin{equation*}
 \lim_{t\to  \infty} u_i(t)=  u,
 \end{equation*}
for some $u\in L^2_m([0,T])$. In particular, we have 
$\mathcal{I}_Nu=
F_N$. 
\end{theorem}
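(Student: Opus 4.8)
The plan is to recognize the collocated equation \eqref{eq:Bolte_global-eta} as an instance of the continuous-time projected gradient dynamics with slow Tikhonov-type regularization studied in \cite[Theorem~5.1]{bolte_JOTA_2003}, and then to verify its hypotheses in the present setting on the Hilbert space $L^2_m([0,T])^N$ with the closed convex consensus subspace $C$. Exactly as in the proof of the preceding (weak convergence) theorem, the rewriting of \eqref{eq:Bolte_i-eta} into the form \eqref{eq:Bolte_global-eta} already matches the structure required there, so the work reduces to checking the assumptions on $V$ and $\eta$ and confirming that the relevant solution set is nonempty.

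First I would record the regularity of the data. The functional $V(u)=\sum_{k=1}^N \tfrac12\|\R_k u_k - f_k\|_{\mathbb R^n}^2$ is convex, being a finite sum of squared norms of the affine maps $u_k \mapsto \R_k u_k - f_k$, and its gradient (computed in the previous proof) is Lipschitz continuous since each $\R_k^*\R_k$ is a bounded linear operator on $L^2_m([0,T])$. By hypothesis \eqref{eq:eta}, the function $\eta$ is nonincreasing, continuously differentiable with bounded derivative, vanishing at infinity, and non-integrable; these are precisely the slow-parametrization conditions under which the extra term $-\eta(t)\,u(t)$ in \eqref{eq:Bolte_global-eta} upgrades weak to strong convergence and selects a distinguished (minimal-norm) element of the solution set.

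The key structural point is nonemptiness of the solution set. Restricting $V$ to $C$, where $u_1=\cdots=u_N=w$, gives $V|_C(w)=\tfrac12\|\mathcal{I}_N w - F_N\|^2$. As established earlier, uniform ensemble controllability renders the parallel connection \eqref{eq:collo-def-parallel-connetion} controllable, so $\mathcal{I}_N$ is surjective and $\mathcal{I}_N w = F_N$ is solvable; hence the minimum of $V$ over $C$ equals $0$ and the set of minimizers is a nonempty (affine) subset of $C$. With convexity, a Lipschitz gradient, an admissible $\eta$, and a nonempty solution set in hand, \cite[Theorem~5.1]{bolte_JOTA_2003} applies and yields global existence of $u(t)=(u_1(t),\dots,u_N(t))$ together with strong convergence $u(t)\to u_*$ to a minimizer $u_*\in C$.

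Finally I would translate this back. Since $u_*\in C$ its components coincide, so writing $u_*=(u,\dots,u)$ gives $\lim_{t\to\infty} u_i(t)=u$ strongly in $L^2_m([0,T])$ for every $i$; and since $u_*$ minimizes $V$ with value $0$, each summand vanishes, i.e. $\R_k u = f_k$ for all $k$, which is exactly $\mathcal{I}_N u = F_N$. I expect the only genuine verification to be the nonemptiness of the solution set, already guaranteed by surjectivity of $\mathcal{I}_N$, so that the bulk of the argument is bookkeeping to fit the cited theorem; the subtlety to keep in view is that the slow-decay conditions \eqref{eq:eta} are exactly what produces strong rather than merely weak convergence and pins down the limit.
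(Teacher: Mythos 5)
Your proposal is correct and follows essentially the same route as the paper: rewrite the collocated system in the form \eqref{eq:Bolte_global-eta} and invoke \cite[Theorem~5.1]{bolte_JOTA_2003}, with the nonemptiness of the solution set coming from controllability of the parallel connection \eqref{eq:collo-def-parallel-connetion}. The paper states this more tersely (it simply cites Bolte's theorem after deriving \eqref{eq:Bolte_global-eta}), while you spell out the hypothesis checks, which is a faithful expansion rather than a different argument.
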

\medskip

\subsection{Averaging Method}

Another method to solve the infinite-dimensional linear integral equation
\begin{equation*}
\mathcal{I}_N u = F_N
\end{equation*}
using the known moment input functions $u_i \in M_i$ is inspired by \cite{shi2017network}, where the authors present distributed algorithms for solving finite-dimensional linear equations. Let
\begin{equation*}
M_i:=\{ u \in  L^2_m([0,T])\, \, |\, \R_i u = f_i \, \}
\end{equation*}
denote the set of all solutions to 
\begin{equation}\label{eq:R_k}
 \R_i u = f_i.
\end{equation}
Following the ideas for distributed solving of linear finite-dimensional equations, cf. \cite{mou2013fixed,shi2017network}, we consider the orthogonal projection $P_{M_i} \colon L^2_m([0,T]) \to M_i$. In order to compute $P_{M_i}$, we note that $M_i$ is an affine subspace and so $P_{M_i}$ is given by
\begin{align*}
g \mapsto P_{M_i} g &= u_i + P_{\op{ker} \R_i} (g-u_i)\\
&=u_i + (I - P_{(\op{ker} \R_i)^\perp}) (g-u_i),
\end{align*}
with $u_i \in M_i$ defined in \eqref{eq:moment-input}. Since the range of $\R_i$ is finite-dimensional, it is also closed. Then, by standard functional analysis one has 
\[(\op{ker} \R_i)^\perp = \I\R_i^*.\] 
Consequently, it remains to determine the orthogonal projection onto $\I\R_i^*$. This is derived as follows. Let $h \in L_m^2([0,T])$ and $y = P_{\I\R_i^*}h$. Then, there is a $v_y\in \mathbb{R}^n$ such that 
\begin{equation*}
 y(s) = B_i^\trans e^{A_i^\trans(T-s)}v_y, \quad \forall \, s\in [0,T].
\end{equation*}
Furthermore, it holds
\begin{equation*}
\langle h- y ,g\rangle_{L^2_m([0,T])} = 0 \qquad \forall \, g \in \I\R_i^*,
\end{equation*}
or equivalently
\begin{equation*}
\langle h-B_i^\trans e^{A_i^\trans(T-\cdot)}v_y   , B_i^\trans e^{A_i^\trans(T-\cdot)}v \rangle_{L^2_m([0,T])} = 0 \qquad \forall \, v \in \mathbb{R}^n.
\end{equation*}
It follows that
\begin{equation*}
v_y= W_i^{-1} \int_0^T e^{A_i(T-s)}B_i h(s)  \di{s}
\end{equation*}
and therefore, we obtain
\begin{align*}
P_{(\op{ker} \R_i)^\perp} h (s) &=  B_i^\trans e^{A_i^\trans(T-s)}W_i^{-1} \int_0^T e^{A_i(T-\tau)}B_i h(\tau)  \di{      \tau}\\
 &= \R_i^*(W_i^{-1} \R_i h).
\end{align*}
As a result, the orthogonal projection onto $M_i$ is given by
\begin{equation}\label{eq:projection-M_i}
P_{M_i} g =g -  \R_i^* (W_i^{-1}\R_i g) + \R^*_i ( W_i^{-1}f_i).
\end{equation}  
A simple calculation shows that the orthogonal projection onto $M_i$ is affine linear, i.e.
\begin{equation*}
P_{M_i} (g+h)  = P_{M_i} g + P_{M_i} h - P_{M_i} (0)
\end{equation*}  
and the following identity holds
\begin{equation*}
P_{M_i} \left(\frac{g_1+\cdots + g_N}{N}\right)   =  \frac{ P_{M_i} g_1 +\cdots + P_{M_i} g_N}{N}.
\end{equation*}
Now, we are in the position to present our method. In order to obtain a global solution we define the projection-consensus system of differential equations
\begin{equation}\label{eq:def-proj+consensus}
\dot u_i(t) = \sum_{k=1}^N \left( P_{M_i} u_k(t)  -  u_i(t) \right).
\end{equation}
As initial conditions we chose again the known moment solutions $u_i(0)=u_i \in M_i$ defined in \eqref{eq:moment-input}. The following result shows that the mean-value of the moment solutions converges and its limit is a global solution. More precisely, we have the following

\medskip
\begin{theorem}
The solutions to \eqref{eq:def-proj+consensus} satisfy
\begin{equation*}
\lim_{t\to \infty} \frac{1}{N} \sum_{i=1}^N u_i(t) = u,
\end{equation*}
and $u$ is a solution to $\mathcal{I}_N u =F_N$.
\end{theorem}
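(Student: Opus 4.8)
The plan is to show that the arithmetic mean $\bar u(t) := \tfrac1N\sum_{i=1}^N u_i(t)$ obeys a closed, decoupled evolution equation, and then to analyze that single equation. Summing \eqref{eq:def-proj+consensus} over $i$, dividing by $N$, and invoking the averaging identity $\tfrac1N\sum_{k=1}^N P_{M_i}u_k = P_{M_i}(\bar u)$ recorded just above, I would obtain
\begin{equation*}
\dot{\bar u}(t) = \sum_{i=1}^N\bigl(P_{M_i}\bar u(t) - \bar u(t)\bigr).
\end{equation*}
Thus, although \eqref{eq:def-proj+consensus} couples all $N$ trajectories, their mean evolves on its own. Since each $P_{M_i}$ is affine with bounded linear part, the right-hand side of \eqref{eq:def-proj+consensus} is a bounded affine vector field on $(L^2_m([0,T]))^N$; hence the full flow exists for all $t\geq 0$ and, in particular, the displayed equation for $\bar u$ has a global solution.

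Next I would substitute the explicit projection formula \eqref{eq:projection-M_i}. Writing $\Pi_i := \R_i^* W_i^{-1}\R_i = P_{(\op{ker}\R_i)^\perp}$, one has $P_{M_i}v - v = -\Pi_i v + \R_i^*(W_i^{-1}f_i)$, so that
\begin{equation*}
\dot{\bar u}(t) = -\Pi\,\bar u(t) + c, \qquad \Pi := \sum_{i=1}^N \Pi_i, \quad c := \sum_{i=1}^N \R_i^*\bigl(W_i^{-1}f_i\bigr).
\end{equation*}
The operator $\Pi$ is a finite sum of orthogonal projections, hence bounded, self-adjoint, positive semidefinite and of finite rank; from $\langle\Pi v,v\rangle = \sum_{i=1}^N\|\Pi_i v\|^2$ I read off $\op{ker}\Pi = \bigcap_{i=1}^N\op{ker}\R_i$, and each summand of $c$ lies in $\I\R_i^*\subseteq\I\Pi$, so $c\in\I\Pi$.

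Convergence then reduces to elementary linear analysis on the orthogonal splitting $L^2_m([0,T]) = \op{ker}\Pi\oplus\I\Pi$. Decomposing $\bar u(t) = p(t)+q(t)$ accordingly, the component $p$ is frozen ($\dot p = 0$), while $q$ satisfies $\dot q = -\Pi q + c$ on the finite-dimensional space $\I\Pi$, on which $\Pi$ is positive definite. Hence $q(t)\to\Pi^{-1}c$ exponentially and $\bar u(t)\to u := p(0)+\Pi^{-1}c$, an equilibrium characterized by $\Pi u = c$.

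Finally I would identify $u$ as a common solution. Controllability of the parallel connection \eqref{eq:collo-def-parallel-connetion} ensures that $S := \bigcap_{i=1}^N M_i$ is nonempty; fix $u^\star\in S$. Any $v\in S$ satisfies $\Pi v = \sum_i\R_i^* W_i^{-1}\R_i v = \sum_i\R_i^* W_i^{-1}f_i = c$, so $\Pi u^\star = c = \Pi u$ and therefore $u-u^\star\in\op{ker}\Pi = \bigcap_i\op{ker}\R_i$. Consequently $\R_i u = \R_i u^\star = f_i$ for every $i$, i.e. $u\in S$ and $\mathcal{I}_N u = F_N$. I expect the decoupling step to be the crux: recognizing that the averaging identity collapses the $N$-fold coupled projection--consensus flow into the single affine equation $\dot{\bar u} = -\Pi\bar u + c$. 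The one remaining subtlety is that an equilibrium of this averaged flow need not a priori lie in each $M_i$; this is resolved by the coset argument above, which shows that the equilibrium set $\{\,v : \Pi v = c\,\}$ coincides with $S$.
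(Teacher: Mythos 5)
Your argument is correct, and after the initial (identical) decoupling step it takes a genuinely different route from the paper. Both proofs first pass to the mean $z(t)=\tfrac1N\sum_i u_i(t)$ via the averaging identity and arrive at $\dot z=\sum_i\bigl(P_{M_i}z-z\bigr)=-\sum_i\R_i^*\bigl(W_i^{-1}\R_i z\bigr)+\sum_i\R_i^*\bigl(W_i^{-1}f_i\bigr)$. The paper then factors $W_i^{-1}=S_iS_i$ with $S_i$ symmetric positive definite, recognizes the right-hand side as $-\nabla V(z)$ for $V(z)=\tfrac12\|\widetilde{\mathcal I}_Nz-\widetilde F_N\|^2$ with a modified interpolation operator $\widetilde{\mathcal I}_N$, and concludes by citing Neuberger's continuous steepest-descent theorem. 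You instead observe that the right-hand side is the affine field $-\Pi z+c$ with $\Pi=\sum_i P_{(\op{ker}\R_i)^\perp}$ of finite rank, split $L^2_m([0,T])=\op{ker}\Pi\oplus\I\Pi$, and solve the resulting frozen/exponentially-contracting pair by hand; the final coset argument (equilibria of the averaged flow with $\Pi u=c$ form exactly $\bigcap_iM_i$, which is nonempty by controllability of the parallel connection) replaces the appeal to Neuberger. Your version is self-contained, yields an exponential convergence rate, and identifies the limit explicitly as $p(0)+(\Pi|_{\I\Pi})^{-1}c$ where $p(0)$ is the $\op{ker}\Pi$-component of the initial mean; the paper's version is shorter and situates the flow within the Sobolev-gradient framework. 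Note that both routes ultimately rest on the standing assumption that the parallel connection is controllable (so that a common solution exists): you invoke it explicitly to get $\bigcap_iM_i\neq\emptyset$, while the paper needs it implicitly for the cited theorem to produce a zero of $V$ rather than a mere least-squares minimizer.
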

\medskip

{\it Proof.} We begin with denoting the mean-value by $z(t)$, i.e.
\begin{equation*}
z(t):= \frac{1}{N} \sum_{i=1}^N u_i(t).
\end{equation*}
The evolution of the mean-value is given by the following ordinary differential equation
\begin{align*}\label{eq:dot-z}
\dot z(t) &= \frac{1}{N}  \sum_{i=1}^N \dot u_i(t)  = \frac{1}{N}  \sum_{i=1}^N \left(    \sum_{k=1}^N P_{M_i} u_k(t)  -  u_i(t) \right)\\
&=  \sum_{i=1}^N \left(   \frac{1}{N}  \sum_{k=1}^N P_{M_i} u_k(t)      -   \frac{1}{N} \sum_{l=1}^N   u_l(t) \right)\\
&=  \sum_{i=1}^N \left(    P_{M_i}\left(\frac{1}{N}\sum_{k=1}^N   u_k(t)\right)      -   \frac{1}{N} \sum_{l=1}^N   u_l(t) \right)\\
&=  \sum_{i=1}^N \left(    P_{M_i} z(t)     -   z(t) \right).
\end{align*}
Spelling out the orthogonal projection \eqref{eq:projection-M_i} the latter differential equation also takes the form
\begin{equation}\label{eq:NR}
\dot z(t) =  - \sum_{i=1}^N  \R_i^* \left(W_i^{-1} \R_i z(t)\right) + \sum_{i=1}^N  \R_i^* \left(W_i^{-1} f_i \right).
\end{equation}
Since $W_i^{-1}$ is symmetric and positiv definite, by \cite[Theorem~7.2.6]{horn1990matrix} there is a symmetric and positiv definite matrix $S_i$ such that $S_i\cdot S_i =W_i^{-1}$. Further, we consider the modified interpolation operator $\widetilde{\mathcal{I}}_N \colon L^2_m([0,T]) \to \mathbb{R}^{nN}$,
 \begin{equation*}
\widetilde{\mathcal{I}}_N u  =
\begin{pmatrix}
S_1 \int_0^T  e^{A_1(T-s)} B_1\, u(s) \di{s}\\
\vdots\\
S_N \int_0^T  e^{A_N(T-s)} B_N\, u(s) \di{s}
\end{pmatrix}
\end{equation*}
and its adjoint  $\widetilde{\mathcal{I}}_N^* \colon \mathbb{R}^{nN}  \to L^2_m([0,T])$,
\begin{equation*}
v= \begin{pmatrix}
v_1\\
\vdots\\
v_N
\end{pmatrix}
\mapsto
\widetilde{\mathcal{I}}_N^*v (s)=  \sum_{i=1}^N B_i^\trans e^{A_i^\trans (T-s)} S_i v_i.
\end{equation*}
Using $\widetilde F_n:= \begin{pmatrix} S_1 f_1 & \cdots & S_N f_N \end{pmatrix}^\trans$, the linear differential equation \eqref{eq:NR} also reads as
\begin{equation*}
\dot z(t) =  -   \widetilde{\mathcal{I}}_N^* \widetilde{\mathcal{I}}_N  z(t)  + \widetilde{\mathcal{I}}_N^* \widetilde F_N.
\end{equation*}
We note that for $V \colon L^2_m([0,T]) \to [0,\infty)$,
\begin{equation*}
V( z) := \frac{1}{2} \| \widetilde{\mathcal{I}}_N  z  - \widetilde F_N \|_{\mathbb{R}^{nN}}^2
\end{equation*}
one has
\begin{equation*}
\dot z(t) = - \nabla V(z) = -   \widetilde{\mathcal{I}}_N^* \widetilde{\mathcal{I}}_N  z  + \widetilde{\mathcal{I}}_N^* \widetilde F_N .
\end{equation*}
Thus, applying \cite[Theorem~3.1]{neuberger_LNM} we conclude that $\lim_{t\to \infty} z(t)=u$ exists and satisfies $ \widetilde{\mathcal{I}}_N u = \widetilde F_N$, or equivalently
\begin{equation*}
 \int_0^T  e^{A_i(T-s)} B_i\, u(s) \di{s}= f_i \qquad \text{ for all } i=1,...,N.
\end{equation*}
This shows the assertion.
\hfill $\Box$

\providecommand{\href}[2]{#2}
\providecommand{\arxiv}[1]{\href{http://arxiv.org/abs/#1}{arXiv:#1}}
\providecommand{\url}[1]{\texttt{#1}}
\providecommand{\urlprefix}{URL }

\end{document}